\tikzstyle{edge} = [fill,opacity=.5,fill opacity=.5,line cap=round, line join=round, line width=50pt]
\theoremstyle{plain}
\theoremstyle{definition}
\newtheorem{theorem}{Theorem}[section]
 \newtheorem{lemma}[theorem]{Lemma}
 \newtheorem{definition}[theorem]{Definition}
 \newtheorem{example}[theorem]{Example}
 \newtheorem{corollary}[theorem]{Corollary}
\DeclareMathAlphabet{\mathpzc}{OT1}{pzc}{m}{it}
\newcommand{\mc}[1]{\mathcal{#1}}
\newcommand{\symm}{\mathfrak{S}}
\DeclareMathOperator{\Pin}{Pin}
\begin{document}

\title{Admissible pinnacle orderings}

\author{Irena Rusu}
\address{LS2N, UMR 6004, Universit\'e de Nantes, France}
\email{Irena.Rusu@univ-nantes.fr}

\author[Bridget Eileen Tenner]{Bridget Eileen Tenner$^*$}
\address{Department of Mathematical Sciences, DePaul University, Chicago, IL, USA}
\email{bridget@math.depaul.edu}
\thanks{$^*$Research partially supported by Simons Foundation Collaboration Grant for Mathematicians 277603 and by a University Research Council Competitive Research Leave from DePaul University.}

\keywords{}%

\subjclass[2010]{Primary: 05A05; 
Secondary: 05A18
}

\begin{abstract}
A {\em pinnacle} of a permutation is a value that is larger than its immediate neighbors when written in one-line notation. In this paper, we build on previous work that characterized admissible pinnacle sets of permutations. For these sets, there can be specific orderings of the pinnacles that are not admissible, meaning that they are not realized by any permutation. Here we characterize admissible orderings, using the relationship between a pinnacle $x$ and its rank in the pinnacle set to bound the number of times that the pinnacles less than or equal to $x$ can be interrupted by larger values.
\end{abstract}

\maketitle

\section{Introduction}\label{sec:intro}

In previous work with Davis, Nelson, and Petersen, the second author defined and studied pinnacle sets of permutations \cite{davis nelson petersen tenner}. This was motivated by the related analyses of peak sets in the literature, including work by Billey, Burdzy, and Sagan \cite{billey burdzy sagan}. The work of \cite{davis nelson petersen tenner} included a variety of related enumerative results about these objects, and---most relevant to the present work---a characterization of which sets can occur as pinnacle sets of permutations. Such sets were called \emph{admissible} pinnacle sets. Recent works by the first author \cite{rusu} and by Diaz-Lopez, Harris, Huang, Insko, and Nilsen \cite{diaz-lopez harris huang insko nilsen} devise procedures for generating all permutations with a given pinnacle set.

It was recently noted by the first author that an admissible pinnacle set may not actually be admissible under all orderings \cite{rusu}. For example, \{3,5,7\} is an admissible pinnacle set, as demonstrated by the permutation $4523176 \in \symm_7$ and depicted in Figure~\ref{fig:4523176}, but there is no permutation with these pinnacles for which they appear in the order $(3,7,5)$.
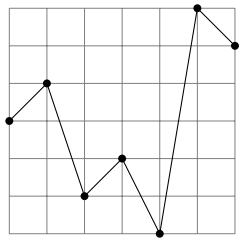
\begin{figure}[htbp]
\centering
\begin{tikzpicture}[scale=.5]
\draw[step=1.0,gray,very thin] (1,1) grid (7,7);
\draw (1,4) -- (2,5) -- (3,2) -- (4,3) -- (5,1) -- (6,7) -- (7,6);
\foreach \x in {(1,4), (2,5), (3,2), (4,3), (5,1), (6,7), (7,6)} {\fill \x circle (3pt);}
\end{tikzpicture}
\caption{The permutation $4523176 \in \symm_7$. Its pinnacle set is $\{3,5,7\}$ and the pinnacles appear in the order $(5,3,7)$.}
\label{fig:4523176}
\end{figure}
Question 3 of \cite{rusu} was to characterize which orderings of the elements in an admissible pinnacle set can be realized.

Here we answer that question, determining which orderings of an admissible pinnacle set are, themselves, admissible. This is, as it turns out, not so much about permutations of the pinnacles as it is about subsets of the pinnacles that must be uninterrupted when they appear in a permutation. We foreshadow that result with the following example.

\begin{example}\label{ex:multilevel bundling}
The set $S = \{3, 5, 8, 9, 13, 14\}$ is an admissible pinnacle set. In any permutation $w$ with pinnacle set $S$, the pinnacles can appear in any order so long as
\begin{itemize}
\item[$\bullet$] the pinnacles $\{3,5\}$ appear consecutively, and
\item[$\bullet$] the pinnacles $\{3,5,8,9\}$ appear consecutively.
\end{itemize}
Thus, there are $2!3!3! = 72$ admissible orderings of the pinnacle set $S$: permutations of the elements of each of the sets
$$\{3,5\}, \ \big\{ \{3,5\}, 8, 9\big\}, \text{ and } \ \Big\{ \big\{ \{3,5\}, 8, 9\big\}, 13, 14\Big\}.$$
This is one tenth of the total orderings of the $6$ elements of $S$.
\end{example}

In Section~\ref{sec:background}, below, we introduce the terminology, notation, and key background for the work in this paper. Section~\ref{sec:main result} contains the main result and we conclude with directions for further research in Section~\ref{sec:follow-up}.

\section{Background}\label{sec:background}

In this paper we consider permutations of $[n]$ as words $w = w(1)w(2)\cdots w(n)$. Let $\symm_n$ be the set of all such permutations.

\begin{definition}
A \emph{peak} of $w$ is an index $i \in [2,n-1]$ such that $w(i-1) < w(i) > w(i+1)$, and a valley is an index $i$ such that $w(i-1) > w(i) < w(i+1)$. Focusing on values and not positions, a \emph{pinnacle} of $w$ is a value $w(i)$ at which $i$ is a peak, and a \emph{vale} of $w$ is a value $w(i)$ at which $i$ is a valley. The pinnacles of a permutation $w$ are denoted $\Pin(w)$.
\end{definition}

\begin{example}
Consider $13287564 \in \symm_8$. The peaks of this permutation are $\{2,4,7\}$, the valleys are $\{3,6\}$, the pinnacles are $\{3,6,8\}$, and the vales are $\{2,5\}$. The pinnacles appear in the order $(3,8,6)$.
\end{example}

When one \emph{graphs} a permutation by plotting and connecting the points $\{(i,w(i))\}$, as in Figure~\ref{fig:4523176}, the resulting ``landscape'' gives geographic motivation to the words ``peak,'' ``valley,'' ``pinnacle,'' and ``vale.''

Not every set can be a pinnacle set. For example $\{2\}$ is not a pinnacle set because there are not two different positive integers, both less than $2$, to serve as neighbors to $2$ in a permutation.

\begin{definition}
A set $S$ is an \emph{admissible} pinnacle set if there exists a permutation whose pinnacle set is $S$.
\end{definition}

The characterizing result of \cite{davis nelson petersen tenner} was the following theorem.

\begin{theorem}[{\cite[Theorem 1.5]{davis nelson petersen tenner}}]\label{thm:admissible sets}
Let $S$ be a set of integers with $\max S = m$. Then $S$ is an admissible pinnacle set if and only if both
\begin{enumerate}
\item[1.] $S \setminus \{m\}$ is an admissible pinnacle set, and
\item[2.] $m > 2|S|$.
\end{enumerate}
\end{theorem}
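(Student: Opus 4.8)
The plan is to prove the equivalent ``global'' criterion that a set $S=\{s_1<s_2<\cdots<s_k\}$ is an admissible pinnacle set if and only if $s_i \ge 2i+1$ for every $i \in [k]$ (each pinnacle exceeds twice its rank). This criterion immediately implies the recursive statement: condition~2 is the $i=k$ case ($m=s_k\ge 2k+1 > 2k$), while for condition~1 one notes that the ranks of $s_1,\dots,s_{k-1}$ agree in $S$ and in $S\setminus\{m\}$, so the criterion for $S$ restricted to $i\le k-1$ is exactly the criterion for $S\setminus\{m\}$. The two workhorses will be an elementary \emph{peak-count bound}---a permutation of length $\ell$ has at most $\lfloor(\ell-1)/2\rfloor$ pinnacles, since peaks occupy pairwise non-adjacent interior positions---and a \emph{restriction lemma} describing what happens to pinnacles when large values are deleted.

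For necessity I would start from a permutation $w$ with $\Pin(w)=S$ and, for each index $i$, pass to the subword $u_i$ obtained by keeping only the values in $\{1,\dots,s_i\}$. The key observation is that every pinnacle value $v\le s_i$ of $w$ survives as a pinnacle of $u_i$: both neighbors of $v$ in $w$ are strictly smaller than $v$, hence $<s_i$ and so retained, and since deletion never inserts anything between two retained adjacent entries, $v$ keeps those two smaller neighbors and remains a local maximum. As the pinnacle values of $w$ that are $\le s_i$ are exactly $s_1,\dots,s_i$, the permutation $u_i$ has length $s_i$ and at least $i$ pinnacles; the peak-count bound then forces $s_i\ge 2i+1$. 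Taking $i=k$ yields $m>2k$, and the full family of inequalities gives the criterion for $S\setminus\{m\}$, which---once sufficiency is established---shows that set is admissible.

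For sufficiency I would argue by induction on $k$, proving the slightly stronger statement that $S$ can be realized by a permutation of $[\,\max S\,]$. Given the criterion, $S'=S\setminus\{m\}$ satisfies it, so by induction it is realized by some $v$ on $[s_{k-1}]$; prepending the decreasing run $(m-1)(m-2)\cdots(s_{k-1}+1)$ extends $v$ to a permutation $v''$ of $[m-1]$ with the same pinnacle set $S'$ (the prepended values are larger than everything in $v$ and strictly decreasing, so they create no new peaks). It then remains to insert the value $m$ between two consecutive positions of $v''$ that are both non-pinnacles: since both flanking values are $<m$, this makes $m$ a new pinnacle, and because neither flanking entry was a pinnacle, nothing is created or destroyed elsewhere. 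The inequality $m\ge 2k+1$ is exactly what guarantees such an adjacent pair exists: if no two consecutive positions were both non-pinnacles, the non-pinnacles would form an independent set among the positions, forcing pinnacles and non-pinnacles to strictly alternate along the length-$(m-1)$ word; with non-pinnacle endpoints this is only possible when $m=2k$, contradicting $m>2k$.

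The main obstacle, I expect, is this last slot-existence step: pinning down that $m>2|S|$ is \emph{precisely} the combinatorial condition allowing a safe insertion, and checking carefully that the local surgery neither destroys an existing pinnacle nor manufactures a spurious one. It is worth flagging why one cannot simply prove condition~1 by deleting the top pinnacle from a given realization: removing the apex merges its two shoulders and always promotes the larger shoulder to a new peak, so naive deletion changes the pinnacle set. Routing both directions through the numeric criterion---restriction for necessity, guided insertion for sufficiency---is what lets us sidestep that difficulty, and getting the restriction lemma's adjacency bookkeeping exactly right is the other place where care is needed.
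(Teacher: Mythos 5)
Your proposal is correct, but it cannot take ``the same approach as the paper'' because the paper offers no proof of this statement at all: it is imported verbatim from \cite[Theorem 1.5]{davis nelson petersen tenner} and used as a black box. In fact your route inverts the paper's logical flow. The global criterion you prove first---$s_i \ge 2i+1$ for each $i$, i.e.\ each pinnacle exceeds twice its rank---is precisely the paper's Theorem~\ref{thm:admissible sets rephrased} (``$|S_x| < x/2$ for all $x \in S$''), which the paper \emph{derives from} the cited recursive theorem by induction; you instead prove the global criterion from scratch and then read the recursive statement off of it, which makes the development self-contained rather than dependent on the citation. Both halves of your argument check out. For necessity, the restriction lemma is sound: a pinnacle $v \le s_i$ has both neighbors smaller than $v$, hence retained in the subword on values $[1,s_i]$ and still adjacent to $v$, so the length-$s_i$ subword has at least $i$ pinnacles, and non-adjacency of peaks gives $i \le \lfloor (s_i-1)/2 \rfloor$, i.e.\ $s_i \ge 2i+1$ (it is fine that the subword may acquire extra pinnacles, since you only need a lower bound). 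For sufficiency, the prepended decreasing run creates no peaks (its entries exceed everything in $v$ and decrease into $v(1)$), and your slot-existence count is exactly right: peaks are pairwise non-adjacent automatically, so if non-pinnacle positions were also pairwise non-adjacent the two types would strictly alternate along the length-$(m-1)$ word, and since the endpoints cannot be peaks this forces $m-1 = 2k-1$, i.e.\ $m = 2k$, contradicting $m \ge 2k+1$; the local surgery of inserting $m$ between two adjacent non-pinnacles changes only the two flanking neighborhoods, neither of which can become or cease to be a peak. Your closing observation---that one cannot prove condition~1 by deleting the maximum from a witness, since the larger shoulder gets promoted to a peak---is well taken and is exactly the trap the guided-insertion direction avoids. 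What your route buys is independence from the external reference and a direct proof of the paper's non-recursive characterization; what it costs is redoing work the paper deliberately outsources.
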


As discussed in that paper, an admissible pinnacle set with maximum element $m$ can be studied in $\symm_m$ without losing any sense of generality. This is because non-pinnacles that are larger than this $m$ would have to appear in a decreasing prefix or increasing suffix, adding no information or restriction about the pinnacles that may appear.

Note that Theorem~\ref{thm:admissible sets} says nothing about the order in which the pinnacles can (or must) appear in a demonstrative permutation.

\begin{definition}
Let $S$ be an admissible pinnacle set. An \emph{admissible ordering} of $S$ is an ordering of the elements of $S$ for which there exists a permutation whose pinnacle set is $S$ and whose pinnacles appear in the given order.
\end{definition}

\begin{example}\
\begin{enumerate}\label{ex:357}
\item[(a)] The set $S = \{3,5,7\}$ is an admissible pinnacle set. The admissible orderings of $S$ are $(3,5,7)$, $(5,3,7)$, $(7,3,5)$, and $(7,5,3)$. Demonstrative permutations for each of these admissible orderings are, respectively, $1325476$, $4513276$, $6713254$, and $6745132$.
\item[(b)] Neither $(3,7,5)$ nor $(5,7,3)$ is an admissible ordering. In both cases the ordering is impossible because there would be nowhere to place $6$ in such a permutation.
\end{enumerate}
\end{example}

One might read Example~\ref{ex:357}(b) to suggest a whiff of permutation patterns to be the issue of admissible orderings. Something like, perhaps: a large pinnacle cannot be sandwiched between smaller pinnacles. However, the actual result has a notably different flavor. Indeed, the main result applied to the pinnacle set $\{3,5,7\}$---in fact, 
to any admissible pinnacle set of the form $\{3,5,x\}$---would say only that the pinnacles $3$ and $5$ must appear consecutively.

\section{Main result}\label{sec:main result}

Throughout this section, let $S$ be an admissible pinnacle set. To ease the discussion, we set the following notation, always assuming that $x$ is a positive integer:
\begin{align*}
\text{non-pinnacles:} \ \ \overline{S} &:= [1,\max S] \setminus S,\\
\text{small pinnacles:} \ \ S_x &:= [1,x] \cap S, \text{ and}\\
\text{small non-pinnacles:} \ \ \overline{S}_x &:= [1,x] \cap \overline{S} = [1,x] \setminus S = [1,x] \setminus S_x.
\end{align*}
Certainly $|S_x| + |\overline{S}_x| = x$.

As suggested by Example~\ref{ex:357}, what might prevent an ordering of $S$ to be admissible is not an element of $S$ itself, but rather the elements of $\overline{S}$. 

\begin{definition}
Let $S$ be a set, and $T \subseteq S$. Fix an ordering $\mc{A}$ of the elements of $S$. Abuse notation for a moment and consider $\mc{A}$ not as a sequence but as a word
$$\mc{A} = a_0 t_1 a_1 t_2 \cdots a_{k - 1} t_k a_k,$$
where each $t_i$ is a word consisting of elements of $T$, each $a_i$ is a word consisting of elements of $S \setminus T$, and only $a_0$ and $a_k$ may be empty. The set $T$ is \emph{interrupted} $k - 1$ times in $\mc{A}$. If $k = 1$, we say that $T$ is \emph{uninterrupted} in $\mc{A}$.
\end{definition}

We demonstrate this interruption with two examples.

\begin{example}\label{ex: 4,6,8,10,11}\
\begin{enumerate}
\item[(a)] For any $S$ and $\mc{A}$, the set $S$ itself is uninterrupted in $\mc{A}$.
\item[(b)] Let $S = \{4,6,8,10,11\}$. The set $\{4,6,8\} \subset S$ is interrupted $1$ time in the ordering $(10,6,4,11,8)$, and $2$ times in the ordering $(6,10,4,11,8)$.
\end{enumerate}
\end{example}

The language of interruption succinctly describes a phenomenon of admissible pinnacle orderings.

\begin{lemma}\label{lem:few valleys means uninterrupted}
For any $x \in S$, if
$$\big|S_x \big| = \big|\overline{S}_x\big| - 1,$$
then $S_x$ is uninterrupted in the admissible orderings of $S$.
\end{lemma}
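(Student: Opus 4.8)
The plan is to recast both the hypothesis and the conclusion as statements about the \emph{positions} of the small values, and then to settle the positional statement by a counting argument on blocks.

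First I would translate the hypothesis. Since $|S_x| + |\overline{S}_x| = x$, the assumption $|S_x| = |\overline{S}_x| - 1$ is equivalent to $x = 2|S_x| + 1$, and this is the only form in which it will be used. For the conclusion, fix an arbitrary permutation $w$ with $\Pin(w) = S$. I claim it suffices to prove that the values in $[1,x]$ (the \emph{low} values) occupy a contiguous set of positions in $w$. Indeed, every pinnacle in $S_x$ has value $\le x$ and so lies inside this contiguous low stretch, while every pinnacle exceeding $x$ lies outside it; hence no large pinnacle can appear positionally between two small pinnacles, which is exactly the assertion that $S_x$ is uninterrupted in the ordering of $\Pin(w)$.

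The core of the argument is a counting estimate on \emph{low blocks}, the maximal runs of consecutive positions of $w$ whose values are $\le x$. Padding $w$ on both ends by a value larger than $x$ (which creates no new pinnacle that is $\le x$, since the endpoints of a permutation are never pinnacles and interior comparisons are undisturbed), I may assume each low block is flanked on both sides by values exceeding $x$. Two observations drive the count. First, the small pinnacles of $w$ are exactly the interior local maxima of the flanked low blocks: a pinnacle $\le x$ has both neighbors strictly smaller, hence $\le x$, hence in the same block, so it is interior; conversely an interior peak of a flanked block has both neighbors in $[1,x]$ and is a genuine pinnacle of $w$. Second, inside a block of length $\ell$ flanked by larger values the local extrema alternate and both flanks are maxima, so the local minima outnumber the interior peaks by exactly one; as all these extrema occupy distinct positions, I obtain $\ell \ge 2 p_B + 1$, where $p_B$ is the number of peaks in that block.

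Summing this inequality over all $N$ low blocks gives $x = \sum_B \ell_B \ge 2\sum_B p_B + N = 2|S_x| + N$. Combined with $x = 2|S_x| + 1$ from the hypothesis, this forces $N \le 1$, and since the value $1$ is always low there is at least one block, so $N = 1$. A single low block means all low values---in particular all of $S_x$---are positionally contiguous, which by the reduction above completes the proof. I expect the main obstacle to be the extremum-alternation estimate $\ell \ge 2p_B + 1$ and, entangled with it, the uniform treatment of blocks that abut the ends of $w$; the padding device is precisely what lets me handle the boundary blocks on the same footing as interior ones without disturbing the set of pinnacles below $x$.
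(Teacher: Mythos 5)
Your proof is correct, and while it runs on the same arithmetic as the paper's (the hypothesis rewritten as $x = 2|S_x|+1$, i.e., $h$ small pinnacles against $h+1$ small non-pinnacles), it is organized through a genuinely different decomposition. The paper argues on the pinnacle side: the $h$ smallest pinnacles must be interspersed with at least $h+1$ elements of $\overline{S}_x$, and since only $h+1$ exist, the $2h+1$ values must appear consecutively. You argue on the position side: you cut $w$ into maximal \emph{low blocks} (runs of positions carrying values $\le x$), prove the per-block inequality $\ell \ge 2p_B + 1$ by the alternation of local extrema in a segment flanked by larger values, and sum over blocks to force $N = 1$. What your version buys is rigor at exactly the points the paper's two-line argument passes over silently: it correctly charges for low values that are not adjacent to any small pinnacle (an isolated low value stranded between two large values consumes an entire block and one unit of the budget, which is what makes the count $x \ge 2|S_x| + N$ airtight), and the padding device puts blocks abutting positions $1$ and $n$ on the same footing as interior blocks without altering the set of pinnacles $\le x$. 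What the paper's version buys is brevity, and it makes directly visible that the uninterrupted stretch consists precisely of the $2h+1$ smallest values --- a fact your argument recovers as the single-block conclusion, since the unique low block contains exactly the values $[1,x]$.
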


\begin{proof}
Set $h := |S_x|$. These smallest $h$ pinnacles must be interspersed with at least $h+1$ elements of $\overline{S}_x$. But $|\overline{S}_x| = h+1$, so there are only $h+1$ elements of $\overline{S}$ available for this purpose. Thus, in $w$, these $2h+1$ values must appear consecutively, in some order that yields the required pinnacles. Thus the pinnacles in $S_x$ are uninterrupted in the admissible orderings of $S$.
\end{proof}

Lemma~\ref{lem:few valleys means uninterrupted} suggests a rephrasing of Theorem~\ref{thm:admissible sets} that will give key insight to the question of admissible orderings. One appeal of this alternative characterization is that it is not recursive.

\begin{theorem}\label{thm:admissible sets rephrased}
A set $S$ of positive integers is an admissible pinnacle set if and only if $|S_x| < x/2$ for all $x \in S$.
\end{theorem}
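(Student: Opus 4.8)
The plan is to prove the equivalence by induction on $|S|$, leveraging the recursive characterization of Theorem~\ref{thm:admissible sets}. The essential observation is that the condition ``$|S_x| < x/2$ for all $x \in S$'' decomposes neatly along the largest element of $S$: the instance $x = \max S$ corresponds exactly to condition (2) of Theorem~\ref{thm:admissible sets}, while the remaining instances correspond precisely to the admissibility of $S \setminus \{\max S\}$. Matching up these two conditions with the two clauses of the recursive theorem is the whole content of the argument.

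First I would record the ``top'' translation. Write $m := \max S$ and $k := |S|$. Since $S_m = S$, we have $|S_m| = k$, so the inequality $|S_m| < m/2$ is literally the statement $m > 2|S|$, which is condition (2) of Theorem~\ref{thm:admissible sets}. For the base case I would take $S = \emptyset$, which is admissible (realized by the identity permutation) and satisfies the condition vacuously.

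For the inductive step, I would assume the equivalence for all sets of size less than $k$ and let $|S| = k$. By Theorem~\ref{thm:admissible sets}, $S$ is admissible if and only if $S' := S \setminus \{m\}$ is admissible and $m > 2k$. The one point that needs care --- and the crux of the argument --- is that deleting $m$ does not disturb the relevant counts for the smaller elements. Concretely, for every $x \in S'$ we have $x < m$, hence $[1,x] \cap S = [1,x] \cap S'$; that is, $S_x = S'_x$, and so $|S_x| = |S'_x|$. Therefore the inductive hypothesis ``$|S'_x| < x/2$ for all $x \in S'$'' is verbatim the statement ``$|S_x| < x/2$ for all $x \in S \setminus \{m\}$.''

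Combining this with the translation of condition (2) established above yields that $S$ is admissible if and only if $|S_x| < x/2$ holds for all $x \in S \setminus \{m\}$ and for $x = m$ --- that is, for all $x \in S$ --- which completes the induction. I do not anticipate a serious obstacle: the work is entirely in matching the two conditions of the recursive theorem to the two halves of the non-recursive inequality, and the only subtlety is the (easy but genuinely necessary) stability of $S_x$ under removal of the maximum, which is what makes the induction close cleanly.
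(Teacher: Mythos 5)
Your proposal is correct and follows essentially the same route as the paper: induction on $|S|$, matching the instance $x = \max S$ of the inequality with condition (2) of Theorem~\ref{thm:admissible sets} and the remaining instances with admissibility of $S \setminus \{\max S\}$ via the inductive hypothesis. Your explicit note that $S_x = S'_x$ for $x < m$ (which the paper uses tacitly) and your base case $S = \emptyset$ rather than $|S| = 1$ are only cosmetic differences.
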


\begin{proof}
We prove Theorem~\ref{thm:admissible sets rephrased} by induction on $|S|$. If $S$ is an admissible $1$-element pinnacle set, then certainly $x \in S$ is at least $3$, and $|S_x| = 1 < 3/2 \le x/2$. Similarly, if $S = \{x\}$ and $1 < x/2$, then $x \ge 3$ and $S$ is clearly admissible: the permutation $1x234\cdots (x-1)$ has pinnacle set $\{x\}$.

Now suppose that the result holds for all sets with at most $n$ elements, and suppose that $|S| = n+1$. Let $m = \max S$. Suppose, first, that $S$ is admissible. By Theorem~\ref{thm:admissible sets}, $|S| = |S_m| < m/2$ and $S \setminus \{m\}$ must be admissible. Therefore $|S_x| < x/2$ for all $x \in S\setminus \{m\}$, by the inductive hypothesis, completing the proof of this direction. On the other hand, if $|S_x| < x/2$ for all $x \in S$, then $|S_m| = |S| < m/2$, and $S \setminus \{m\}$ is admissible by the inductive hypothesis. It follows from Theorem~\ref{thm:admissible sets}, then, that $S$ is admissible.
\end{proof}

As observed above, the inequality ``$|S_x| < x/2$'' in Theorem~\ref{thm:admissible sets rephrased} could be rephrased as
$$|S_x| < |\overline{S}_x|.$$

One corollary of Theorem~\ref{thm:admissible sets rephrased} is that admissibility is maintained in ``down-sets.''

\begin{corollary}\label{cor:down-sets}
If $S$ is an admissible pinnacle set, then $S_x$ is an admissible pinnacle set for all $x$.
\end{corollary}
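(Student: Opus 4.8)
The plan is to use the non-recursive characterization just established in Theorem~\ref{thm:admissible sets rephrased}, which reduces admissibility to the single inequality $|S_y| < y/2$ for all $y \in S$. Since $S_x$ is itself a set of positive integers, I only need to verify that this same inequality holds for every element of $S_x$.

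First I would fix an arbitrary $y \in S_x$ and compute $(S_x)_y$, the small pinnacles of $S_x$ at level $y$. Because $S_x = [1,x] \cap S$, intersecting again with $[1,y]$ for $y \le x$ simply gives $[1,y] \cap S$, so $(S_x)_y = S_y$. This is the key observation: the ``small pinnacles below $y$'' operation is insensitive to whether we start from $S$ or from the truncation $S_x$, as long as $y \le x$. This identity is the heart of the argument and I expect it to require only a one-line set-theoretic check rather than any real obstacle.

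With $(S_x)_y = S_y$ in hand, I would invoke the hypothesis that $S$ is admissible: by Theorem~\ref{thm:admissible sets rephrased}, $|S_y| < y/2$ holds for every $y \in S$, and in particular for every $y \in S_x \subseteq S$. Substituting $(S_x)_y = S_y$ then yields $|(S_x)_y| < y/2$ for all $y \in S_x$, which is exactly the criterion of Theorem~\ref{thm:admissible sets rephrased} applied to the set $S_x$. Hence $S_x$ is an admissible pinnacle set, as desired.

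The only subtlety I anticipate is handling the range of $x$: if $x$ is not itself an element of $S$, or if $x < \min S$, then $S_x$ may be a proper truncation or even empty, and I would want to note that the empty set and any valid truncation still satisfy the criterion vacuously or by inheritance. The non-recursive form of the characterization is what makes this clean, since one does not have to re-examine maxima or rebuild the set from the top down; every required inequality for $S_x$ is literally one of the inequalities already guaranteed for $S$.
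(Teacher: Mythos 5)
Your proof is correct and follows exactly the route the paper intends: Corollary~\ref{cor:down-sets} is presented there as an immediate consequence of Theorem~\ref{thm:admissible sets rephrased}, and your one-line identity $(S_x)_y = S_y$ for $y \le x$ is precisely the observation that makes the inequalities $|S_y| < y/2$ transfer from $S$ to $S_x$. Your attention to the degenerate cases ($x < \min S$, or $S_x$ empty) is a reasonable extra care the paper leaves implicit.
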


We are now able to state the main result, answering the question in \cite{rusu}. That result will be stated in terms of the statistic
$$k_x := |\overline{S}_x| - |S_x| - 1,$$
for  $x \in S$.

\begin{theorem}\label{thm:main}\label{thm:admissible orders}
Let $S$ be an admissible pinnacle set. An ordering $\mc{A}$ of $S$ is admissible if and only if, for each $x \in S$, the set $S_x$ is interrupted at most $k_x$ times in $\mc{A}$.
\end{theorem}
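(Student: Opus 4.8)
The plan is to prove both implications of the characterization, treating the ``only if'' (necessity) direction by a direct counting argument and the ``if'' (sufficiency) direction by an inductive construction that inserts $\max S$ into a permutation realizing the smaller pinnacle set $S \setminus \{\max S\}$.

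For necessity, suppose $\mc{A}$ is realized by a permutation $w$ and fix $x \in S$. Write $h := |S_x|$ and suppose $S_x$ is interrupted $r$ times in $\mc{A}$, so that in $w$ the pinnacles of $S_x$ appear in $r+1$ maximal runs, consecutive among all pinnacles, separated by pinnacles exceeding $x$. The key observation is that the immediate neighbors of any pinnacle $p \le x$ are values smaller than $p$ that are not themselves pinnacles, hence elements of $\overline{S}_x$. I would then account, for each run, for the distinct elements of $\overline{S}_x$ it forces: a run containing $j$ pinnacles has $j-1$ internal valleys (one strictly between each consecutive pair, each less than $x$ and a non-pinnacle), together with the valley immediately before its first pinnacle and the valley immediately after its last pinnacle. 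These are $j+1$ distinct elements of $\overline{S}_x$, and the boundary valleys of distinct runs are separated in $w$ by a pinnacle exceeding $x$, hence occupy distinct positions and are distinct across runs. Summing over the $r+1$ runs yields $|\overline{S}_x| \ge h + (r+1)$, so that $r \le |\overline{S}_x| - |S_x| - 1 = k_x$, as required. When $k_x = 0$ this recovers Lemma~\ref{lem:few valleys means uninterrupted}.

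For sufficiency, suppose $\mc{A}$ satisfies the interruption bounds, and induct on $|S|$. Let $m = \max S$ and $S' = S \setminus \{m\}$, which is admissible by Corollary~\ref{cor:down-sets}, and let $\mc{A}'$ be the restriction of $\mc{A}$ to $S'$. First I would check that $\mc{A}'$ satisfies the bounds for $S'$: for $x < m$ we have $S'_x = S_x$ and $\overline{S'}_x = \overline{S}_x$, so the relevant statistics agree, and deleting the single large value $m$ from $\mc{A}$ can only merge runs of $S_x$, hence cannot increase the number of interruptions. The inductive hypothesis then provides a permutation $w'$ realizing $\mc{A}'$. It remains to insert $m$ as a pinnacle at the location dictated by $\mc{A}$, together with the large non-pinnacles $m'+1, \ldots, m-1$ (where $m' = \max S'$), placed on monotone slopes around $m$ so that they do not become peaks and do not disturb the existing pinnacles.

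The main obstacle is the insertion of $m$ when it lies strictly between two pinnacles $p$ and $q$ in $\mc{A}$. In $w'$ these are consecutive pinnacles separated by a single valley, whereas creating a new pinnacle $m$ between them requires \emph{two} flanking valleys, one of which must be a small non-pinnacle less than $q$. I would supply this valley by relocating a small non-pinnacle that currently serves only as a filler on a monotone slope of $w'$, its removal preserving monotonicity and hence introducing no new peak; the heart of the argument is to show that such a relocatable value always exists. This is exactly where the full strength of the hypothesis enters: the bounds ``$S_x$ interrupted at most $k_x$ times'' guarantee, through the same count as in the necessity direction, that the supply of small non-pinnacles strictly exceeds what the runs of small pinnacles consume, leaving a filler available for the new valley. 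Equivalently, one may bypass the relocation step and build $w$ in a single pass from $\mc{A}$, choosing the internal valley values by a Hall-type matching whose feasibility is governed precisely by the inequalities in the statement; I expect verifying this feasibility to be the most delicate part of the write-up.
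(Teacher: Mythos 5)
Your necessity argument is correct and essentially identical to the paper's: tallying the $|S_x| + r + 1$ distinct elements of $\overline{S}_x$ forced by $r+1$ runs (internal valleys, plus the two boundary neighbors of each run, distinct across runs because runs are separated by pinnacles exceeding $x$) is the same count the paper performs---one vale to the left of each element of $S_x$, one per interruption, one at the right end---so that half stands.

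The sufficiency half, however, has a genuine gap: both of your proposed routes stop exactly where the proof has to begin. The existence of a ``relocatable'' small non-pinnacle, and equivalently the feasibility of your Hall-type matching, is asserted but never verified, and your justification---that ``the supply of small non-pinnacles strictly exceeds what the runs of small pinnacles consume''---is a global surplus claim, whereas what is needed is a value-restricted one: the spare value inserted next to $m$ between pinnacles $p$ and $q$ must lie below $q$ (or below $p$), so you need leftover elements of $\overline{S}_x$ at \emph{every} threshold $x$, after accounting for all vales already committed below $x$. Moreover, in your top-down induction the permutation $w'$ supplied by the inductive hypothesis is a black box: you have no control over which small values it spent as vales versus parked on monotone slopes, so ``a filler on a slope of $w'$, smaller than $q$, exists'' is not available without strengthening the inductive hypothesis (for instance, to ``$w'$ uses the smallest possible values as vales''). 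The paper dissolves this difficulty by reversing the direction of the construction: rather than inserting the largest pinnacle last, it fixes the pinnacles in the positions dictated by $\mc{A}$ and fills in the vales bottom-up, processing $S_{x_1} \subset S_{x_2} \subset \cdots$ and always assigning the smallest unused non-pinnacles. The supply check at stage $i$ is then a one-line consequence of the hypothesis: with $j_{x_i} \le k_{x_i}$ interruptions, the vales consumed so far number $|S_{x_i}| + j_{x_i} + 1 \le |S_{x_i}| + k_{x_i} + 1 = |\overline{S}_{x_i}|$, so the greedy choice never runs out below any threshold. If you want to salvage your induction on $\max S$, you would need to carry this smallest-vales invariant through the induction---at which point you will have essentially reproduced the paper's greedy argument.
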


\begin{proof}

First suppose that $\mc{A}$ is admissible. The vales that are interspersed among the pinnacles $y \in S_x$ must be less than $x$, and hence are elements of $\overline{S}_x$. There are only $|S_x| + k_x + 1$ such values available. For $S_x$ to be interrupted more than $k_x$ times would require at least $|S_x| + k_x + 2$ valleys/vales: one vale to the left of each element of $S_x$, one vale at the start of each interruption, and one value to the right of the rightmost element in $S_x$. There are too few vales available, so $S_x$ must be interrupted at most $k_x$ times.

Now suppose that we have an ordering $\mc{A}$ of $S$ in which, for each $x \in S$, the set $S_x$ is interrupted at most $k_x$ times. We will show that $\mc{A}$ is admissible by constructing a permutation $w$ whose pinnacles appear in this order. The construction will use the smallest $|S| + 1$ elements of $\overline{S}$ as vales, which is sufficient to show that $\mc{A}$ is admissible because the remaining elements of $\overline{S}$ can be added as a decreasing prefix and/or increasing suffix.

Let $S = \{x_1, \ldots, x_p\}$ with $x_1 < x_2 < \cdots < x_p$. Use the two smallest elements in $\overline{S}_{x_1}$, which are necessarily $1$ and $2$, as vales incident to $x_1$. Now assume, inductively, that all pinnacles in $S_{x_i}$ have their incident vales filled with the smallest elements in $\overline{S}$. The set $S_{x_i}$ is interrupted $j_{x_i} \le k_{x_i}$ times, so we have used $|S_{x_i}| + j_{x_i} + 1 \le |\overline{S}_{x_i}|$ elements of $\overline{S}_{x_i}$ to do this.

We must now fill in the vales incident to the pinnacles in $S_{x_{i+1}} \setminus S_{x_i}$ that have not already been determined. By hypothesis, $\overline{S}_{x_{i+1}}$ contains enough remaining elements to do this, and we use the smallest among them for the task.
\end{proof}

We demonstrate the proof of Theorem~\ref{thm:main} with examples, recalling Example~\ref{ex: 4,6,8,10,11}(b).

\begin{example}\label{ex:4,6,8,10,11 cont}\
\begin{enumerate}
\item[(a)]Consider $S = \{4,6,8,10,11\}$ and $\mathcal{A} = (10,6,4,11,8)$. We now try to fill between the elements of $\mathcal{A}$, iteratively, to create a permutation with the desired pinnacles in the desired order.
$$\begin{array}{ccccccccccc}
\underline{\hspace{.25in}} &10&\underline{\hspace{.25in}} &6& \underline{\hspace{.25in}} &4& \underline{\hspace{.25in}} &11& \underline{\hspace{.25in}} &8& \underline{\hspace{.25in}}\\
\underline{\hspace{.25in}} &10&\underline{\hspace{.25in}} &6& 1 &4& 2 &11& \underline{\hspace{.25in}} &8& \underline{\hspace{.25in}}\\
\underline{\hspace{.25in}} &10& 3 &6& 1 &4& 2 &11& \underline{\hspace{.25in}} &8& \underline{\hspace{.25in}}\\
\underline{\hspace{.25in}} &10& 3 &6& 1 &4& 2 &11& 5 &8& 7\\
9 &10& 3 &6& 1 &4& 2 &11& 5 &8& 7
\end{array}$$
The fact that this $\mc{A}$ is an admissible ordering is confirmed by computing the $k_x$ values (using the notation of Theorem~\ref{thm:main}) and comparing them to the interruptions in $\mathcal{A}$:
$$k_4 = 1 \hspace{.25in} k_6 = 1 \hspace{.25in} k_8 = 1 \hspace{.25in} k_{10} = 1 \hspace{.25in} k_{11} = 0,$$
while $S_4$ and $S_6$ and $S_{11} = S$ are each uninterrupted, and $S_8$ and $S_{10}$ are each interrupted once.
\item[(b)] Now consider the same set $S$, and the ordering $\mc{A}' = (6,10,4,11,8)$. Since $S_8$ is interrupted twice in $\mathcal{A}'$, and $k_8 = 1$, this $\mc{A}'$ is not an admissible ordering. Indeed, there would be no place for the value $9$ while still creating the desired pinnacles and in the desired order.
\end{enumerate}
\end{example}

Note that the value $k_x = |\overline{S}_x| - |S_x| - 1$ in the statement of Theorem~\ref{thm:main} can also be computed via $x - 2|S_x| - 1$. Similarly, if $S = \{x_1 < x_ 2 < \cdots < x_p\}$, then
\begin{equation}\label{eqn:k_{x_i} in terms of x_i}
k_{x_i} = x_i-2i-1.
\end{equation}

\section{Implications and follow-up questions}\label{sec:follow-up}

One can easily use the main result to confirm the observation of \cite{rusu} and Example~\ref{ex:multilevel bundling}.

\begin{example}
Consider the admissible pinnacle set $S = \{3,5,x\}$ where $x > 5$ (in fact, to be admissible, $x > 6$). Because $|\overline{S}_5| - |S_5| = 1$, we must have that $\{3,5\}$ is uninterrupted in the admissible orderings of $S$.
\end{example}

\begin{example}\label{ex:multilevel bundling redux}
Consider the admissible pinnacle set $S = \{3,5,8,9,13,14\}$. Compute $k_x$ for each $x \in S$:
\begin{align*}
k_3 &= 2-1 - 1 = 0,\\
k_5 &= 3-2 - 1 = 0,\\
k_8 &= 5-3 - 1 = 1,\\
k_9 &= 5-4 - 1 = 0,\\
k_{13} &= 8-5 - 1 = 2, \text{ and}\\
k_{14} &= 8-6 - 1 = 1.
\end{align*}
Therefore, $\{3,5\}$ and $\{3,5,8,9\}$ must each be uninterrupted in the admissible orderings of $S$. We also find that $\{3,5,8\}$ can be interrupted at most once and $\{3,5,8,9,13\}$ can be interrupted at most twice. However, requiring $\{3,5,8,9\}$ to be uninterrupted will guarantee that these requirements for $S_8$ and $S_{13}$ are met.
\end{example}

We can also use Theorem~\ref{thm:main} to more fully explain Example~\ref{ex: 4,6,8,10,11}.

\begin{example}
Let us describe all admissible orderings of the admissible pinnacle set $S = \{4,6,8,10,11\}$. Based on the values $k_x$ computed in Example~\ref{ex:4,6,8,10,11 cont}(a), we find that $\{4\}$, $\{4,6\}$, $\{4,6,8\}$, and $\{4,6,8,10\}$ can each be interrupted at most once, while $\{4,6,8,10,11\}$ must be uninterrupted. Several of these requirements are guaranteed by any permutation with these pinnacles, and the only nontrivial requirement is that $\{4,6,8\}$ be interrupted at most once in any admissible ordering of $S$. With this information, we can count admissible orderings of $S$. Namely, there are $5!$ permutations of $S$, and we disallow those in which $\{4,6,8\}$ occur in alternating positions: $5! - 3!2! = 108$.
\end{example}

The strictest requirements we find from Theorem~\ref{thm:main} are when $|S_x| = (x-1)/2$, in which case $S_x$ must be uninterrupted. Because $S$ is a set of integers, we can draw the following conclusion.

\begin{corollary}\label{cor:only check odd}
The only sets $S_x$ which require no interruption are when $x \in S$ is odd.
\end{corollary}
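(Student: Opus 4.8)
The plan is to translate the condition ``$S_x$ requires no interruption'' into an arithmetic statement about $x$ and then invoke the integrality of cardinalities. By Theorem~\ref{thm:main}, the set $S_x$ is forced to be uninterrupted in every admissible ordering precisely when the number of interruptions it may sustain is $k_x = 0$. Using the identity $k_x = x - 2|S_x| - 1$ recorded just after the theorem, the equation $k_x = 0$ is equivalent to $|S_x| = (x-1)/2$.

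Next I would observe that $|S_x|$ is, by definition, the cardinality of a finite set and hence a nonnegative integer. Therefore $|S_x| = (x-1)/2$ can hold only if $(x-1)/2 \in \mathbb{Z}$, which forces $x-1$ to be even and hence $x$ to be odd. This establishes the claimed implication: if $S_x$ requires no interruption, then $x$ is odd.

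It is worth confirming, via the contrapositive, that the even case genuinely never demands non-interruption. Since $S$ is admissible, Theorem~\ref{thm:admissible sets rephrased} gives $|S_x| < x/2$ for every $x \in S$. When $x$ is even, $x/2$ is itself an integer, so the strict inequality between integers sharpens to $|S_x| \le x/2 - 1$, whence $k_x = x - 2|S_x| - 1 \ge 1$. By Theorem~\ref{thm:main}, such an $S_x$ may be interrupted at least once and so does not require non-interruption.

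I do not anticipate any real obstacle: the corollary is at heart a parity observation resting on the fact that $|S_x|$ must be an integer. The only point needing a moment's care is the bookkeeping in the even case, where one must pass from the strict inequality $|S_x| < x/2$ to the integer bound $|S_x| \le x/2 - 1$ before concluding $k_x \ge 1$.
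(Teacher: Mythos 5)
Your proof is correct and takes essentially the same route as the paper, which deduces the corollary from the observation that $S_x$ is forced to be uninterrupted exactly when $k_x = x - 2|S_x| - 1 = 0$, i.e., $|S_x| = (x-1)/2$, an integrality condition that forces $x$ to be odd. Your explicit even-case bookkeeping (using $|S_x| < x/2$ from Theorem~\ref{thm:admissible sets rephrased} to sharpen to $|S_x| \le x/2 - 1$ and hence $k_x \ge 1$) merely spells out what the paper compresses into the phrase ``Because $S$ is a set of integers.''
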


An obvious open problem is to find a function for computing the number of admissible orderings of a given pinnacle set.

In another direction, we can ask a question analogous to the one that led to the recent work of \cite{diaz-lopez harris huang insko nilsen,rusu}. Namely, is there a class of operations that one may apply to any permutation $w$ with $\Pin(w) = S$ and pinnacles appearing in a fixed order $\mc{A}$, to obtain any other $w'$ with the same pinnacle set and ordering of the pinnacles, and no other permutations?

Similarly, is there a way to use properties of $S$ (without having to look at all permutations with pinnacle set $S$) to endow its orderings with a partial order such that $\mc{A} \le \mc{A}'$ in the partial order, then the number of permutations with pinnacles appearing in the order $\mc{A}$ is less than or equal to the number with pinnacles appearing in the order $\mc{A}'$?

Given Theorem~\ref{thm:main} and Corollary~\ref{cor:only check odd}, it is natural to wonder about pinnacle sets with ``extreme'' amounts of uninterruption. For example, what does ``extreme'' mean in this context? What pinnacle sets have it? How many are there? What does this imply for the permutations with those pinnacle sets? How many are there? Etc. One could ask similar questions about pinnacle sets with ``average'' amounts of interruption.

We conclude by mentioning one version of extremism here; namely, those pinnacle sets in which every ordering is admissible.

\begin{definition}
An admissible pinnacle set $S$ is \emph{maximally admissible} if every ordering of $S$ is admissible.
\end{definition}

A pinnacle set $S$ that is maximally admissible has $|S|!$ admissible pinnacle orderings, and fewer otherwise.

\begin{corollary}\label{cor:maximally-admissible}
An admissible pinnacle set $S = \{x_1 < x_2 < \cdots < x_p\}$ is maximally admissible if and only if 
for each $1<i < p$ the condition $x_i\geq \min\{p+i+1,3i\}$ holds.
 \end{corollary}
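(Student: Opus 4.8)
The plan is to convert maximal admissibility into a statement about each individual set $S_{x_i}$ by way of Theorem~\ref{thm:main}, and then to locate, for each index $i$, the ordering of $S$ that interrupts $S_{x_i}$ as often as possible. By definition, $S$ is maximally admissible precisely when \emph{every} ordering $\mc{A}$ of $S$ meets the criterion of Theorem~\ref{thm:main}. Negating, $S$ fails to be maximally admissible exactly when there is some index $i$ and some ordering in which $S_{x_i}$ is interrupted more than $k_{x_i}$ times. Thus the whole problem reduces to computing, for each $i$, the maximum number of interruptions of $S_{x_i}$ taken over all orderings of $S$, and then comparing that maximum against $k_{x_i}$.

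First I would isolate the purely combinatorial fact that a subset $T \subseteq S$ with $|T| = i$ can be interrupted at most $\min\{i-1, p-i\}$ times in any ordering of $S$, and that this bound is attained. The upper bound is immediate from the definition of interruption: if $T$ splits into $b$ maximal blocks, then the $b-1$ internal gaps between them are nonempty words over $S \setminus T$, so $b - 1 \le |S \setminus T| = p - i$; also trivially $b \le |T| = i$. Hence the number of interruptions $b-1$ is at most $\min\{i-1, p-i\}$. Attainability follows from an explicit alternating arrangement, using singleton separators from $S \setminus T$ when $p - i \ge i - 1$ and singleton blocks of $T$ otherwise. The key point is that for each fixed $i$ this maximum is realized by a single ordering, so the ``only if'' direction requires no simultaneity across different values of $i$.

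Applying this with $T = S_{x_i}$, maximal admissibility becomes the family of inequalities $\min\{i-1, p-i\} \le k_{x_i}$ for $1 \le i \le p$. Substituting $k_{x_i} = x_i - 2i - 1$ from \eqref{eqn:k_{x_i} in terms of x_i}, each inequality rearranges to
$x_i \ge 2i + 1 + \min\{i-1, p-i\} = \min\{3i, p+i+1\}$,
which is exactly the stated condition. It then remains to check that the endpoints impose no restriction: for $i = 1$ and $i = p$ the left side $\min\{i-1, p-i\}$ equals $0$, while admissibility of $S$ forces $k_{x_1}, k_{x_p} \ge 0$ (via Theorem~\ref{thm:admissible sets rephrased}, which gives $x_1 \ge 3$ and $x_p \ge 2p+1$). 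So the conditions at $i=1$ and $i=p$ hold automatically, and only the indices $1 < i < p$ remain, matching the statement.

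The hard part will be the interruption-counting lemma: confirming that $\min\{i-1, p-i\}$ is simultaneously a correct ceiling and genuinely achievable, and being careful that achievability is needed only one index at a time rather than for all $i$ at once. Everything after that is bookkeeping with the identity $k_{x_i} = x_i - 2i - 1$ and the case split $i - 1 \le p - i$ versus $i - 1 > p - i$ that defines the minimum.
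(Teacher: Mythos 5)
Your proposal is correct and follows essentially the same route as the paper's proof: reduce via Theorem~\ref{thm:main} to the per-index condition $\min\{i-1,p-i\}\leq k_{x_i}$, where $\min\{i-1,p-i\}$ is the maximum possible number of interruptions of $S_{x_i}$, and then substitute $k_{x_i}=x_i-2i-1$ from equation~\eqref{eqn:k_{x_i} in terms of x_i}. Your write-up is somewhat more careful than the paper's, in that you make explicit both the attainability of the bound (via an alternating arrangement) and the vacuousness of the endpoint cases $i=1$ and $i=p$, which the paper asserts only in passing.
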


\begin{proof}
For a fixed $i$, each interruption of $S_{x_i}$ is due to an element of $S\setminus S_{x_i}$, so the number of interruptions of $S_{x_i}$
is bounded from above by $|S \setminus S_{x_i}|=p-i$. Every ordering is admissible if and only if $S_{x_i}$ may be interrupted at least 
$\min\{|S_{x_i}| - 1,|S \setminus S_{x_i}|\}=\min\{i-1,p-i\}$ times, for each $1<i<p$. 
(The set $S$ itself cannot be interrupted, nor can $S_{x_1}$.) 
According to Theorem~\ref{thm:main} and equation (1), that means $\min\{i-1,p-i\}\leq k_{x_i}=x_i-2i-1$ and the conclusion follows.
\end{proof}

Example~\ref{ex:multilevel bundling redux} shows that in Theorem~\ref{thm:main}, some of the tests concerning the number of interruptions
of $S_x$ may be unnecessary. We identify below a subset of tests avoiding a lot of these redundancies.

\begin{corollary}\label{cor:eliminating redundancy}
Let $S=\{x_1 < x_2 < \cdots < x_p\}$ be an admissible pinnacle set. An ordering $\mc{A}$ of $S$ is admissible if and only 
if for each $1<i<p$ such that:

\begin{itemize}
\item[\emph{($i$)}] $x_i<\min\{p+i+1,3i\}$, and
\item[\emph{($ii$)}] $x_{i-1}+2\geq x_i\leq x_{i+1}-2$.
\end{itemize}

\noindent the set $S_{x_i}$ is interrupted at most $k_{x_i}=|\overline{S}_{x_i}| - |S_{x_i}|-1$ times in $\mc{A}$.

\end{corollary}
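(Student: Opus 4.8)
The forward implication is immediate: by Theorem~\ref{thm:main}, every admissible ordering respects the interruption bound at \emph{every} $x\in S$, in particular at the indices singled out by conditions~($i$) and~($ii$). The whole content is therefore the reverse implication, and my plan is to show that each test discarded by ($i$) or ($ii$) is redundant---either automatically satisfied or implied by a retained one. Throughout, I fix an ordering $\mc{A}$, write $c_i$ for the number of times $S_{x_i}$ is interrupted in $\mc{A}$, and recall that $k_{x_i}=x_i-2i-1$.

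I would first dispose of condition~($i$). Producing $c_i$ interruptions of $S_{x_i}$ requires at least $c_i+1$ elements of $S_{x_i}$ and at least $c_i$ separating elements of $S\setminus S_{x_i}$, so $c_i\le\min\{i-1,\,p-i\}$ for \emph{any} ordering. A short computation shows $k_{x_i}=x_i-2i-1\ge\min\{i-1,\,p-i\}$ exactly when $x_i\ge\min\{3i,\,p+i+1\}$, that is, exactly when ($i$) fails. Hence a test failing ($i$) is satisfied automatically and may be dropped. The excluded endpoints $i=1$ and $i=p$ behave likewise: $S_{x_1}$ is a singleton and $S_{x_p}=S$, so both are uninterrupted, and admissibility (which gives $x_j>2j$) yields $k_{x_1},k_{x_p}\ge 0$.

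The engine for condition~($ii$) is a monotonicity observation. Since $S_{x_{i+1}}=S_{x_i}\cup\{x_{i+1}\}$, inserting the single value $x_{i+1}$ changes the number of maximal blocks of the set, and hence its interruption count, by at most one; thus $c_i\le c_{i+1}+1$ and $c_i\le c_{i-1}+1$ always hold. Now suppose $i$ satisfies ($i$) but fails ($ii$). If $x_{i+1}=x_i+1$, then $k_{x_{i+1}}=k_{x_i}-1$, so the bound at $x_{i+1}$ combined with $c_i\le c_{i+1}+1$ forces $c_i\le k_{x_i}$. If instead $x_i\ge x_{i-1}+3$, then $k_{x_i}\ge k_{x_{i-1}}+1$, so the bound at $x_{i-1}$ combined with $c_i\le c_{i-1}+1$ again forces $c_i\le k_{x_i}$. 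Either way, the discarded test defers to a neighbor.

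The main obstacle is that the neighbor one defers to may itself be discarded, so the deferrals must be chained and shown to terminate. In the first case one climbs through indices with gap $x_{\ell+1}-x_\ell=1$, telescoping $c_i\le c_j+(j-i)$ against $k_{x_j}=k_{x_i}-(j-i)$, until reaching an index $j$ with $x_{j+1}-x_j\ge 2$---which then satisfies~($ii$), its gap below being $1$---or reaching $j=p$, where $S$ is uninterrupted. In the second case one descends through indices with gap $\ge 3$, telescoping $c_i\le c_j+(i-j)$ against $k_{x_j}\le k_{x_i}-(i-j)$, until reaching an index with gap below $\le 2$---which satisfies~($ii$), its gap above being $\ge 3$---or reaching $j=1$. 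At every terminus the relevant test is either retained (hence assumed to pass) or automatically satisfied, so telescoping delivers $c_i\le k_{x_i}$. Verifying that each chain terminates correctly, and that every terminal index genuinely satisfies~($ii$) or falls under the automatic cases above, is the bookkeeping I expect to require the most care.
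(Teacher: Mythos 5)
Your proposal is correct and takes essentially the same route as the paper: the forward direction via Theorem~\ref{thm:main}; tests failing condition~($i$) dismissed because any ordering admits at most $\min\{i-1,\,p-i\}$ interruptions of $S_{x_i}$, which is at most $k_{x_i}$ exactly when ($i$) fails (the computation underlying Corollary~\ref{cor:maximally-admissible}); and tests failing condition~($ii$) handled by telescoping the fact that interruption counts of consecutive sets $S_{x_l}$ differ by at most one against the $k$-differences given by equation~\eqref{eqn:k_{x_i} in terms of x_i}, along a maximal chain anchored at a retained or trivially satisfied test. The only minor difference is that your chains terminate at the nearest index satisfying ($ii$) alone, invoking the automatic bound if ($i$) fails there, whereas the paper's terminate at the nearest index satisfying both conditions (or at $h=1$ or $h=p$); the bookkeeping you flag does check out, and your gap-based chain definition in fact makes the monotone $k$-chains self-evident where the paper merely asserts them.
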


\begin{proof} The forward direction is directly implied by Theorem~\ref{thm:main}.

In order to prove the reverse direction, we will show that even for the pinnacles $x_j$ that do not satisfy both conditions~($i$) and ($ii$),  
the set $S_{x_j}$ is interrupted at most $k_{x_j}$ times. Note that neither $S$ nor $S_{x_1}$ can be interrupted, and so any bound on the number of interruptions permitted for these sets is trivially satisfied. 

Suppose that $x_j\in S$ is a pinnacle that does not satisfy both conditions~($i$) and ($ii$), with $1 < j < p$.  If condition~($i$) is false for $x_j$, then $x_j\geq \min\{p+j+1,3j\}$. Corollary \ref{cor:maximally-admissible} implies that
$k_{x_j}$ is large enough to allow as many interruptions of $S_{x_j}$ as possible, and the conclusion follows.

We assume now that $x_j$ does not satisfy condition~($ii$). By equation~\eqref{eqn:k_{x_i} in terms of x_i}, we have that $x_{l-1}+2\geq x_l$ if and only if $k_{x_{l-1}}\geq k_{x_l}$, for all $1 < l \le p$. Thus
$$x_{j-1}+2\geq x_j\leq x_{j+1}-2\,\, \hbox{if and only if}\,\, k_{x_{j-1}}\geq k_{x_{j}}\leq k_{x_{j+1}}$$
For each $1<l<p$, if $h$ is the number of interruptions of $S_{x_l}$ in $\mc{A}$, then
\begin{itemize}
\item[(a)] the number of interruptions of $S_{x_{l+1}}$ in $\mc{A}$ is at most $h+1$, and
\item[(b)] the number of interruptions of $S_{x_{l-1}}$ in $\mc{A}$ is at most $h+1$.
 \end{itemize}
Then there are two possible cases.

\begin{quotation}
\noindent Case 1: $k_{x_{j-1}}< k_{x_j}$.

Let $h \leq j-1$ be maximal such that $x_{h}$ satisfies conditions~($i$) and ($ii$), or $h=1$. Thus $h=1$, or 
$k_{x_{h-1}}\geq k_{x_{h}} <k_{x_{h+1}}< \cdots <k_{x_{j-1}}<k_{x_j}$. By hypothesis, the set $S_{x_{h}}$ is interrupted at most $k_{x_{h}}$ times. Furthermore, by affirmation~(a) applied $j-h$ times, 
we deduce that the number of interruptions of $S_{x_j}$ is at most $k_{x_{h}}+(j-h)$. The inequalities between $k_{x_h}$ and $k_{x_j}$ imply that $k_{x_{h}}+(j-h)\leq k_{x_j}$, as needed.

\vspace{.1in}

\noindent Case 2: $k_{x_{j}}> k_{x_{j+1}}$.

Let $h \geq j+1$ be minimal such that $x_{h}$ satisfies conditions~($i$) and ($ii$), or $h=p$. Thus $h=p$, or 
$k_{x_{h+1}} \ge k_{x_{h}}<k_{x_{h-1}}<\cdots<k_{x_{j+1}}<k_{x_j}$. By hypothesis, the set $S_{x_{h}}$ is interrupted at most $k_{x_{h}}$ times. Furthermore, by affirmation~(b) applied $h-j$ times, 
we have that the number of interruptions of $S_{x_j}$ is at most $k_{x_{h}}+(h-j)$. Again, the inequalities between $k_{x_h}$ and $k_{x_j}$ imply that $k_{x_{h}}+(h-j)\leq k_{x_j}$, as needed.

\end{quotation}

\noindent By Theorem~\ref{thm:main}, we deduce that $\mc{A}$ is admissible.
  
\end{proof}

To demonstrate Corollary~\ref{cor:eliminating redundancy}, we return to Example~\ref{ex:multilevel bundling redux}.

\begin{example}
Consider the admissible pinnacle set $S = \{3,5,8,9,13,14\}$. Following Corollary~\ref{cor:eliminating redundancy}, the only interruptions we need to check are for the pinnacles $5, 9 \in S$. We have $k_5 = 0$, meaning that $S_5$ must be uninterrupted, and $k_9 = 0$, meaning that $S_9$ must be uninterrupted. This latter fact forces $S_8$ to be interrupted at most once (necessarily by $9$), while $S_{13}$ can never be interrupted more than once (because the only remaining pinnacle is $14$). The sets $S_3$ and $S_{14} = S$ are necessarily uninterrupted. Thus we have recovered the conclusions of Example~\ref{ex:multilevel bundling redux}.
\end{example}

\section*{Acknowledgements}

We are grateful for the thoughtful feedback of Alexander Diaz-Lopez and Erik Insko. We also appreciate the careful reading of anonymous referees.

\section{Declarations}

\paragraph{Funding.} B.E. Tenner - Research partially supported by Simons Foundation Collaboration Grant for Mathematicians 277603 and by a University Research Council Competitive Research Leave from DePaul University.

\paragraph{Conflicts of interest/Competing interests.} 
The authors declare that they have no conflict of interest.

\paragraph{Availability of data and material.} Not applicable.

\paragraph{Code availability.} Not applicable.

\end{document}